\newdimen\AAdi%
\newbox\AAbo%
\def\AAk#1#2{\s_etbox\AAbo=\hbox{#2}\AAdi=\wd\AAbo\kern#1\AAdi{}}%
\def\AAr#1#2#3{\s_etbox\AAbo=\hbox{#2}\AAdi=\ht\AAbo\raise#1\AAdi\hbox{#3}}%
\font\tenmsb=msbm10 at 12pt \font\sevenmsb=msbm7 at 8pt
\font\fivemsb=msbm5 at 6pt
\newtheorem{theorem}{Theorem}
\newtheorem{corollary}[theorem]{Corollary}
\newtheorem{lemma}[theorem]{Lemma}
\numberwithin{equation}{section} \numberwithin{theorem}{section}
\renewcommand{\topmargin}{0cm}
\renewcommand{\oddsidemargin}{5mm}
\renewcommand{\evensidemargin}{5mm}
\renewcommand{\textwidth}{150mm}
\renewcommand{\textheight}{230mm}
\def\R{\mathbb R}
\def\S{\mathbb S}
\def\na{\nabla}
\def\f#1#2{\frac{#1}{#2}}
\def\a{\alpha}
\def\be{\beta}
\def\r{\Re_{I\!V}}
\def\p#1{\partial #1}
\def\de{\delta}
\def\De{\Delta}
\def\e{\eta}
\def\ep{\epsilon}
\def\g{\gamma}
\def\k{\kappa}
\def\la{\lambda}
\def\La{\Lambda}
\def\lan{\langle}
\def\ran{\rangle}
\def\Om{\Omega}
\def\si{\sigma}
\def\Si{\Sigma}
\def\r{\rho}
\def\div{\mathrm{div}}
\begin{document}

\title[Subsequent singularities in manifolds]
{Subsequent singularities of mean convex mean curvature flows in smooth manifolds}
\author{Qi Ding}
\address{Shanghai Center for Mathematical Sciences, Fudan University,
Shanghai 200433, China}
\email{dingqi@fudan.edu.cn}\email{dingqi09@fudan.edu.cn}

\thanks{The author is supported partially by 15ZR1402200. He would like to thank Haslhofer for discussion on non-compactness of translating solitons in the previous edition. He also would like to thank the referees for various suggestions to make this paper more readable.}

\begin{abstract}
For any $n$-dimensional smooth manifold $\Si$, we show that all the singularities of the mean curvature flow with any initial mean convex hypersurface in $\Si$ are cylindrical (of convex type) if the flow converges to a smooth hypersurface $M_\infty$ (maybe empty) at infinity.
Previously this was shown (i) for $n\le 7$, and (ii) for arbitrary $n$ up to the first singular time without the smooth condition on $M_\infty$.
\end{abstract}
\maketitle

\section{Introduction}

Singularities of mean curvature flow are unavoidable if the flow starts from a closed embedded hypersurface in Euclidean space. When the initial hypersurface is mean convex in Euclidean space, the mean curvature flow (level set flow) preserves mean convexity. So we sometimes call it mean convex mean curvature flow.

Huisken-Sinestrari obtained the convexity estimate for mean convex mean curvature flow \cite{HS1,HSi1,HSi2} and the cylindrical estimate for mean curvature flow of two-convex hypersurface \cite{HSi2}, respectively. In particular, any smooth rescaling of the singularity in the first singular time is convex by \cite{HS1,HSi1}.
B. White in \cite{W00,W03} showed that any singularity of mean convex mean curvature flow which occurs in the first singular time, must be of convex type. Here, a singular point $x$ of the flow $M_t$ has \emph{convex type} if
\begin{enumerate}
\item [(1)] any tangent flow at $x$ is cylindrical, namely, a multiplicity one shrinking round cylinder $\R^k\times\S^{n-k}$ for some $k<n$.
\item [(2)] for each sequence $x_i\in M_{t(i)}$ of regular points that converge to $x$,
$$\liminf_{i\rightarrow\infty}\f{\k_1(M_{t(i)},x_i)}{H(M_{t(i)},x_i)}\ge0,$$
\end{enumerate}
where $\k_1,\k_2\cdots,\k_n$ are the principal curvatures with $\k_1\le\k_2\cdots\le\k_n$, and $H=\sum_i\k_i>0$. Furthermore, White \cite{W03,W13} showed that all the singularities of mean convex mean curvature flow in Euclidean space are of convex type. And see \cite{And,HaK,SW} for more results in this direction.
On the other hand, Colding-Minicozzi \cite{CM1} showed that the only singularities of generic mean curvature flow in $\R^3$ are spherical or cylindrical. In \cite{CIM} Colding-Ilmanen-Minicozzi obtained a rigidity theorem for round cylinders in a very strong sense.

In the aspect of structure of the singular set of mean curvature flow, White \cite{W00} showed that parabolic Hausdorff dimension of the space-time singular set is $n-1$ at most for mean convex mean curvature flow in $\R^{n+1}$.
When a mean curvature flow starts from a closed embedded hypersurface in $\R^{n+1}$ with only generic singularities, Colding-Minicozzi \cite{CM3} showed that their space-time singular set is contained in finitely many compact embedded $(n-1)$-dimensional Lipschitz submanifolds plus a set of dimension $n-2$ at most.

When the initial hypersurface is mean convex in an $n$-dimensional smooth manifold $\Si$, mean convexity is preserved by mean curvature flow $(\mathcal{M},\mathcal{K})$ in $\Si$ in view of \cite{W00}. Let $(\mathcal{M}',\mathcal{K}')$ be any limit flow if $n\le7$ or a special
limit flow if $n>7$, where $\mathcal{K}':\, t\in\R\mapsto K_t'$ (see \cite{W03} for the definition). Then $K_t'$ is convex for every $t$ showed by White \cite{W03}. Furthermore, if $(\mathcal{M}',\mathcal{K}')$ is backwardly self-similar, then it is either (i) a static multiplicity 1 plane or (ii) a shrinking sphere or cylinder \cite{W03}.
In this paper, we will show that $K_t'$ is convex for every $t$ if $(\mathcal{M}',\mathcal{K}')$ is any limit flow for $n>7$ and the flow $\mathcal{M}$ converges to a smooth hypersurface (maybe empty) at infinity.
\begin{theorem}
Let $\mathcal{M}:\, t\in[0,\infty)\mapsto M_t$
be a mean curvature flow starting from a mean convex, smooth hypersurface in a complete smooth manifold. If $\lim_{t\rightarrow\infty}\left(\cup_{s>t}M_s\right)$ (maybe empty) is a smooth hypersurface,
then all the subsequent singularities of $\mathcal{M}$ must have convex type.
\end{theorem}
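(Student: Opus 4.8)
The plan is to reduce the theorem to one assertion about blow-ups: that \emph{every} limit flow $(\mathcal{M}',\mathcal{K}')$ of $\mathcal{M}$, and not merely the special ones, has $K_t'$ convex for all $t$. Granting this, both clauses in the definition of convex type follow along the lines of White's treatment of the Euclidean case \cite{W13}. Clause~(1) is the classification of backwardly self-similar limit flows into planes, spheres and cylinders quoted above --- the plane being excluded at a singular point by White's local regularity theorem --- and is already available for every $n$. Clause~(2) follows by contradiction: a sequence $x_i\in M_{t(i)}$ of regular points converging to a singular point $x$ with $\k_1(M_{t(i)},x_i)/H(M_{t(i)},x_i)\to c<0$ would, after parabolically rescaling $\mathcal{M}$ about $x_i$ at a bounded-curvature scale, subconverge to a limit flow with an interior regular spacetime point at which $\k_1/H<0$, contradicting convexity of that limit flow. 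So everything comes down to ruling out non-convex limit flows when $n>7$ --- equivalently, to propagating the Huisken--Sinestrari convexity estimate \cite{HSi1} from the first singular time to all subsequent ones in the present setting.

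\textbf{Step 1 (confining the singularities, and extracting a worst limit flow).} Since $\cup_{s>t}M_s$ converges smoothly as $t\to\infty$ to the (possibly empty) hypersurface $M_\infty$, for all sufficiently large $t$ the slice $M_t$ is a smooth normal graph over $M_\infty$ of arbitrarily small $C^2$ norm; hence $\mathcal{M}$ is a smooth flow on $[T_*,\infty)$ for some finite $T_*$, every singular point lies in the compact slab $\Si\times[0,T_*]$, and $\mathcal{M}$ has uniform bounds on its area ratios and Gaussian densities. Suppose now, for contradiction, that some limit flow of $\mathcal{M}$ is not convex. Every limit flow is in any case weakly mean convex ($H\ge0$) and of multiplicity one, by White's work \cite{W00,W03}; using this together with Brakke--White compactness and a point-selection argument on the compact slab --- and recalling that a limit flow of a limit flow is again a limit flow --- one extracts a limit flow $\hat{\mathcal{M}}$ together with an interior regular point $\hat X$ at which the scale-invariant quantity $\k_1/H$ attains a negative value $\de$ equal to its infimum over all limit flows and all their regular points.

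\textbf{Step 2 (splitting and descent).} Near $\hat X$ the flow $\hat{\mathcal{M}}$ is a smooth mean curvature flow in Euclidean $\R^{n+1}$ --- the ambient manifold having been dilated to flat space in the limit --- and $\k_1/H$ attains there an interior spacetime minimum equal to $\de<0$. The strong maximum principle for the parabolic equation satisfied by $\k_1/H$ (whose reaction term is favorable where $\k_1/H<0$, as in Huisken--Sinestrari) then forces $\k_1/H\equiv\de$ on $\hat{\mathcal{M}}$, and the consequent rigidity of the second fundamental form splits $\hat{\mathcal{M}}$ as a Riemannian product $\R\times N$, with $N$ an ancient, weakly mean convex flow in one lower ambient dimension on whose regular part $\k_1/H\equiv\de<0$. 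Iterating, the ambient dimension falls by one at each step; the descent never reaches a flat totally geodesic factor, for there $H\equiv0$, incompatible with $\k_1/H=\de<0$; so it terminates at a one-dimensional flow of curves, where $\k_1\equiv H\ge0$ makes $\de<0$ impossible. Hence no limit flow of $\mathcal{M}$ is non-convex, and the theorem follows from the reduction above.

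\textbf{The main obstacle.} The delicate point is the legitimacy of the extraction in Step~1 and of the descent in Step~2 when $n>7$. For a \emph{special} limit flow White's estimates already furnish the regularity and weak convexity used above; for a \emph{general} limit flow neither is automatic in high dimensions, and one has to exclude a priori that the limit flow has a singular set so large that $\de$ is realized only at singular points, that the descent stalls because some factor $N$ is a static nontrivial stable minimal cone rather than a smooth flow, or that it stalls at a non-compact translating soliton failing to be convex --- the last being exactly the gap in the earlier version of this paper. This is where the hypothesis on $M_\infty$ does its work: confining every singularity to the compact slab $\Si\times[0,T_*]$, together with the attendant uniform area and density bounds, forces every limit flow of $\mathcal{M}$ to inherit the unit-regularity and density lower bounds of White's special limit flows --- so that in particular every static limit is a multiplicity-one plane and the offending non-compact translators are ruled out --- after which White's Euclidean arguments \cite{W13} go through with only the notational changes due to the ambient manifold.
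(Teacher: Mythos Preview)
Your approach is genuinely different from the paper's, and the gap is in your last paragraph: it is not an obstacle you overcome but one you name and then assert away. You claim that confining the singularities to a compact spacetime slab ``forces every limit flow of $\mathcal{M}$ to inherit the unit-regularity and density lower bounds of White's special limit flows,'' so that static limits are planes and non-convex translators are excluded. But this is precisely what has to be proved. For $n>7$ a general limit flow could a priori contain a static singular minimal cone or a singular translating piece, and the mere compactness of the singular slab of the \emph{original} flow does not rule that out: blow-ups are local and do not see the global slab. Your point-selection in Step~1 likewise presupposes that the regular set of limit flows is large enough for the infimum of $\k_1/H$ to be realized at a regular spacetime point, and your iteration in Step~2 needs the split factor $N$ to again be an admissible flow to which the argument applies; neither is justified in high dimensions without exactly the regularity you are trying to establish. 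In short, you have located the difficulty correctly but not supplied the mechanism by which the smooth-$M_\infty$ hypothesis resolves it.

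The paper does not argue through limit flows at all. It uses Ilmanen's elliptic regularization: for each $\la>0$ one builds a smooth translating graph $S_\la\subset\Om\times\R$ whose associated flow converges to $t\mapsto M_t\times\R$ as $\la\to\infty$. On $S_\la$ one derives the elliptic equation satisfied by $h_{ij}+\g H_\la$ and runs a maximum-principle argument to obtain $|A_\la|\le-\g^*_\la H_\la$ on any compact $\overline K$ containing the spatial singular set, with $\g^*_\la$ depending only on curvature data of $\Si$ and on $\inf_{\p K}(|A_\la|H_\la^{-1})$. Here is where the smooth-$M_\infty$ hypothesis actually does its work: it lets one choose $K$ with $\p K$ inside the regular set, so that $|A_\la|H_\la^{-1}$ on $\p K$ converges to the (finite) ratio for the smooth level sets of $v$ and $\g^*_\la$ is bounded independently of $\la$. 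Passing to the limit gives $|A_\infty|\le-\g^*\,\mathrm{div}_\Si(Dv/|Dv|)$, i.e.\ a uniform pointwise bound $|A|\le CH$ on the regular part of $\mathcal{M}$ over $K$. One then invokes White's Appendix~B in \cite{W13}, which deduces convex type directly from such an $|A|\le CH$ bound. Thus the hypothesis is used not to tame limit flows, but to furnish good boundary data for a maximum principle on the approximating translators.
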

Our proof heavily depends on Ilmanen's elliptic regularization and White's work on motion by mean curvature, where we give a delicate analysis for the second fundamental form of the corresponding translating soliton related to the considered mean curvature flow in a manifold.
If either $\Si$ has nonnegative Ricci curvature or $\Si$ is simply connected with nonpositive sectional curvature, we can remove the smooth condition on the hypersurface $\lim_{t\rightarrow\infty}\left(\cup_{s>t}M_s\right)$, and get the same conclusion (see Corollary \ref{ThmCor}).
This can be thought of as a generalization of Theorem 3 of White \cite{W13}. After this paper, Haslhofer-Hershkovits \cite{HaH} got structure theorem of singularities of mean convex mean curvature flows in Riemannian manifolds by another method independently, where even they do not need the smooth condition of the flows at infinity.

\section{Translating solitons for mean curvature flow}

Let $(\Si,\si)$ be an $n$-dimensional smooth complete manifold with Riemannian metric $\si=\sum_{i,j=1}^n\si_{ij}dx_idx_j$ in a local
coordinate. Let $N$ denote the product space $\Si\times\R$ with the product metric
$$\si+dt^2=\sum_{i,j}\si_{ij}dx_idx_j+dt^2.$$
Let $\lan\cdot,\cdot\ran$ and $\overline{\na}$ denote the inner product and the Levi-Civita connection of $N$ with respect to its metric, respectively. Set $(\si^{ij})$ be the inverse matrix of $(\si_{ij})$. Let $\p_{x_i}$ and $E_{n+1}$ be the dual frame of $dx_i$ and $dt$, respectively. Denote
$Df=\sum_{i,j}\si^{ij}f_i\p_{x_j}$ and $|Df|^2=\sum_{i,j}\si^{ij}f_if_j$ for any $C^1$-function $f$ on $\Si$. Let $\div_{\Si}$ be the divergence
of $\Si$.
Let $R$ and $Ric$ denote the curvature tensor and Ricci curvature of $\Si$, respectively. Let $\overline{R}$ and $\overline{Ric}$ be the curvature tensor and the Ricci curvature of $N=\Si\times\R$, respectively.

Let $S$ be an $n$-dimensional smooth graph in $\Si\times\R$ with the graphic function $u$ and the induced metric $g$.
In a local coordinate, $g=g_{ij}dx_idx_j=\left(\si_{ij}+u_iu_j\right)dx_idx_j$, and then $g^{ij}=\si^{ij}-\f{u^iu^j}{1+|Du|^2}$, where $u^i=\si^{jk}u_k$.
Let $\De$, $\na$ be the Laplacian and Levi-Civita connection
of $(S,g)$, respectively. Let $\nu$ denote the unit normal vector field of $M$ in $N$ defined by
\begin{equation}\label{No}
\nu=\f1{\sqrt{1+|Du|^2}}(-Du+E_{n+1}).
\end{equation}

Now we assume that $S$ is a translating soliton satisfying the following equation
\begin{equation}\aligned\label{HlaEnu}
H+\la\lan E_{n+1},\nu\ran=0
\endaligned
\end{equation}
for some constant $\la>0$. The equation \eqref{HlaEnu} is equivalent to
\begin{equation}\aligned\label{divla}
\mathrm{div}_\Si\left(\f{Du}{\sqrt{1+|Du|^2}}\right)+\f{\la}{\sqrt{1+|Du|^2}}=0.
\endaligned
\end{equation}
In a local coordinate, the equality \eqref{divla} can be rewritten as
\begin{equation}\aligned\label{gupijla}
\sum_{i,j=1}^n\left(\si^{ij}-\f{u^iu^j}{1+|Du|^2}\right)u_{i,j}+\la=0,
\endaligned
\end{equation}
where $u_{i,j}$ is the covariant derivative on $\Si$ with respect to $\p_{x_i},\p_{x_j}$. Analog to Theorem 4.3 in \cite{X1}, $S$ is an area-minimizing hypersurface with the weight $e^{-\la x_{n+1}}$ in $\Si\times\R$. By $\na u=Du-\lan Du,\nu\ran\nu$ and \eqref{No}, we get
\begin{equation}\aligned\label{Enauuuu}
\left\lan E_{n+1},\na u\right\ran=-\lan Du,\nu\ran\lan E_{n+1},\nu\ran=\f{|Du|^2}{1+|Du|^2}=|\na u|^2.
\endaligned
\end{equation}

Choose a local orthonormal frame field $\{e_i\}_{i=1}^n$ in $S$, which is a normal basis at the considered point. Set the coefficients of the second fundamental form $h _{ij}=\lan\overline{\na}_{e_i}e_j,\nu\ran$
and the squared norm of the second fundamental form $|A|^2=\sum_{i,j}h_{ij}h_{ij}$. Then the mean curvature $H=\sum_ih_{ii}$.
Denote $\na_i=\na_{e_i}$ and $\overline{R}_{\nu jik}=\lan\overline{R}_{\nu j}e_i,e_k\ran=\lan-\overline{\na}_\nu\overline{\na}_{e_j}e_i+\overline{\na}_{e_j}\overline{\na}_\nu e_i+\overline{\na}_{[\nu,e_j]}e_i,e_k\ran$.
From \eqref{HlaEnu} and Codazzi equation $h_{jk,i}-h_{ji,k}=-\overline{R}_{\nu jki}$, we have
\begin{equation}\aligned\label{nainajH}
\na_i\na_jH=&-\la\na_i\lan E_{n+1},\na_{e_j}\nu\ran=\la\na_i\left(\lan E_{n+1},e_k\ran h_{jk}\right)\\
=&\la\lan E_{n+1},\nu\ran h_{ik}h_{jk}+\la\lan E_{n+1},e_k\ran h_{jk,i}\\
=&\la\lan E_{n+1},\nu\ran h_{ik}h_{jk}+\la\lan E_{n+1},e_k\ran h_{ji,k}-\la\lan E_{n+1},e_k\ran\overline{R}_{\nu jki}\\
=&\la\lan E_{n+1},\nu\ran h_{ik}h_{jk}+\la\lan E_{n+1},\na h_{ij}\ran-\la\lan E_{n+1},e_k\ran\overline{R}_{\nu jki}.
\endaligned
\end{equation}
By Simons' identity (see \cite{Z} for instance), we have
\begin{equation}\aligned\label{Simon'sIdentity}
\De h_{ij}=&\na_i\na_jH+Hh_{ik}h_{jk}-|A|^2h_{ij}+H\overline{R}_{\nu i\nu j}-h_{ij}\overline{Ric}(\nu,\nu)\\
&+\overline{R}_{kikp}h_{jp}+\overline{R}_{kjkp}h_{ip}+\overline{R}_{kijp}h_{kp}+\overline{R}_{pjik}h_{kp}+\overline{\na}_k\overline{R}_{\nu jik}+\overline{\na}_i\overline{R}_{\nu kjk}.
\endaligned
\end{equation}
From \eqref{HlaEnu}, substituting \eqref{nainajH} to \eqref{Simon'sIdentity} we get
\begin{equation}\aligned
\De h_{ij}=&\la\lan E_{n+1},\na h_{ij}\ran-|A|^2h_{ij}-\la\lan E_{n+1},e_k\ran\overline{R}_{\nu jki}+H\overline{R}_{\nu i\nu j}-h_{ij}\overline{Ric}(\nu,\nu)\\
&+\overline{R}_{kikp}h_{jp}+\overline{R}_{kjkp}h_{ip}+\overline{R}_{kijp}h_{kp}+\overline{R}_{pjik}h_{kp}+\overline{\na}_k\overline{R}_{\nu jik}+\overline{\na}_i\overline{R}_{\nu kjk}.
\endaligned
\end{equation}
Since $N$ is a product manifold with the product metric, then $\lan\overline{R}_{\nu j}e_i,E_{n+1}\ran=0$ by Appendix A of \cite{Li}.
Hence
$$-\la\lan E_{n+1},e_k\ran\overline{R}_{\nu jki}=\la\left\lan\overline{R}_{\nu j}e_i,E_{n+1}-\lan E_{n+1},\nu\ran\nu\right\ran=-\la\lan E_{n+1},\nu\ran\left\lan\overline{R}_{\nu j}e_i,\nu\right\ran=H\overline{R}_{\nu ji\nu}.$$
Then we obtain
\begin{equation}\aligned\label{Dehijla}
\De h_{ij}=&\la\lan E_{n+1},\na h_{ij}\ran-|A|^2h_{ij}-h_{ij}\overline{Ric}(\nu,\nu)\\
&+\overline{R}_{kikp}h_{jp}+\overline{R}_{kjkp}h_{ip}+2\overline{R}_{kijp}h_{kp}+\overline{\na}_k\overline{R}_{\nu jik}+\overline{\na}_i\overline{R}_{\nu kjk}.
\endaligned
\end{equation}
and
\begin{equation}\aligned\label{DeHla}
\De H=&\la\lan E_{n+1},\na H\ran-\left(|A|^2+\overline{Ric}(\nu,\nu)\right)H.
\endaligned
\end{equation}

Let $\e$ be a smooth function on $\R$ satisfying that $\e(t)=t$ for $t\in(-\infty,1]$, $1\le\e\le 2$ on $(1,3)$, $\e\equiv2$ on $[3,\infty)$, $0\le\e'\le1$ and $\e''\le0$ on $\R$. Set $\e_\a(t)=\a\e\left(\f{t}{\a}\right)$ on $\R$ for any $\a>0$. Assume that $\Om$ is a domain in $\Si$ with positive mean curvature boundary $\p\Om$. Then there is an area-minimizing hypersurface $S_{\la,\a}$ with the weight $e^{-\la \e_\a(x_{n+1})}$ in $\Om\times\R$ with $\p S_{\la,\a}=\p\Om\times\{0\}$ for $\la>0$. Note that $\e_\a$ has a finite upper bound, so it is not hard to see that $S_{\la,\a}$ is compact. In particular, $S_{\la,\a}$ satisfies the following equation
\begin{equation}\aligned\label{Hlaa}
H+\la\e'_\a\lan E_{n+1},\nu\ran=0,
\endaligned
\end{equation}
where $\nu$ is the unit normal vector of $S_{\la,\a}$.
If $S_{\la,\a}$ is a graph over some open set with the graphic function $w$, the equation \eqref{Hlaa} can be rewritten as
\begin{equation}\aligned\label{gupijlae'}
\sum_{i,j=1}^n\left(\si^{ij}-\f{w^iw^j}{1+|Dw|^2}\right)w_{i,j}+\la\e'_\a(w)=0
\endaligned
\end{equation}
in a local coordinate.

Now let's show that $S_{\la,\a}$ is a graph over $\Om$ by White's argument \cite{W13}. Let $S_{\la,\a}(t)=\{X-tE_{n+1}|\ X\in S_{\la,\a}\}$ for any $t\in\R$. Let $t_m$ be the smallest nonnegative number such that $S_{\la,\a}(t_m)$ intersects $S_{\la,\a}$ in the interior. If $t_m=0$, then clearly $S_{\la,\a}$ is a graph. If $t_m>0$, the two hypersurfaces $S_{\la,\a}$ and $S_{\la,\a}(t_m)$ touch at an interior point $X=(x,t)\in\Om\times\R$. Hence the tangent cones to $S_{\la,\a}$ and to $S_{\la,\a}(t_m)$ at $X$, respectively, both lie in halfspaces and are therefore a same hyperplane. So $X$ is a regular point of $S_{\la,\a}$ and $S_{\la,\a}(t_m)$, respectively. $S_{\la,\a}(t_m)$ satisfies the equation
$$H+\la\e'_\a(\cdot+t_m)\lan E_{n+1},\nu\ran=0.$$
Namely, $S_{\la,\a}(t_m)$ can be written as a graph over some neighborhood of $x$ with a graphic function $\tilde{w}$ satisfying
\begin{equation}\aligned
\sum_{i,j=1}^n\left(\si^{ij}-\f{\tilde{w}^i\tilde{w}^j}{1+|D\tilde{w}|^2}\right)\tilde{w}_{i,j}+\la\e'_\a(\tilde{w}+t_m)=0.
\endaligned
\end{equation}
Let $\varphi=w-\tilde{w}$, then $\varphi=0$ at $x$ and $\varphi\ge0$ in some neighborhood of $x$ as $S_{\la,\a}(t_m)$ is below $S_{\la,\a}$. Moreover,
\begin{equation}\aligned\label{Compula'}
&\sum_{i,j}\left(\si^{ij}-\f{\p^iw\p^jw}{1+|Dw|^2}\right)\varphi_{i,j}
=-\la\e'_\a(w)-\sum_{i,j}\left(\si^{ij}-\f{\p^iw\p^jw}{1+|Dw|^2}\right)\tilde{w}_{i,j}\\
=&-\la\e'_\a(w)+\la\e'_\a(\tilde{w}+t_m)+\sum_{i,j}\left(\f{\p^iw\p^jw}{1+|Dw|^2}-\f{\p^i\tilde{w}\p^j\tilde{w}}{1+|D\tilde{w}|^2}\right)\tilde{w}_{i,j}\\
=&-\la\e'_\a(w)+\la\e'_\a(\tilde{w}+t_m)+\sum_{i,j}\left(\f{\p^iw\p^jw-\p^i\tilde{w}\p^j\tilde{w}}{1+|D\tilde{w}|^2}
+\f{\p^iw\p^jw}{1+|Dw|^2}-\f{\p^iw\p^jw}{1+|D\tilde{w}|^2}\right)\tilde{w}_{i,j}\\
=&-\la\e'_\a(w)+\la\e'_\a(\tilde{w}+t_m)+\sum_{i,j}\f{\p^i\varphi\p^jw+\p^i\tilde{w}\p^j\varphi}{1+|D\tilde{w}|^2}\tilde{w}_{i,j}\\
&\quad-\f{\p^iw\p^jw\ \tilde{w}_{i,j}}{\left(1+|Dw|^2\right)\left(1+|D\tilde{w}|^2\right)}\lan D(w+\tilde{w}),D\varphi\ran.
\endaligned
\end{equation}
Combining $\e''_\a\le0$ and $t_m>0$, we conclude that $S_{\la,\a}$ and $S_{\la,\a}(t_m)$ lie in parallel vertical planes by the strong maximum principle at $x$. However, it is impossible. So $S_{\la,\a}$ is a smooth graph over $\Om$, which minimizes the weighted area with the weight $e^{-\la\e_\a(x_{n+1})}$ and with boundary $\p\Om\times\{0\}$ in $\Om\times\R$. Moreover, if $u_{\la,\a}$ is the graphic function of $S_{\la,\a}$, then $u_{\la,\a}\ge0$ by maximum principle.

Set $d(x)=d(x,\p\Om)$ for all $x\in\Om$, and $\Om_t=\{x\in\Om|\ d(x)>t\}$ for $t\ge0$. There is a constant $0<\ep<1$ such that $\p\Om_t$ is smooth with mean curvature $\mathscr{H}(x,t)\ge\ep$ for any $x\in\p\Om_t$ and $0\le t<\ep$, and $d$ is smooth on $\Om\setminus\Om_\ep$.
\begin{lemma}
Let $u_{\la,\a}$ be a smooth solution to \eqref{gupijlae'} on $\overline{\Om}$ with $u_{\la,\a}=0$ on $\p\Om$ for any $\la,\a>0$, then we have the following boundary gradient estimate:
$$|Du_{\la,\a}|\le\ep^{-1}(\la+1)\qquad \mathrm{on}\ \ \p\Om.$$
\end{lemma}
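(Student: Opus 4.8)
\emph{Sketch of the argument.} The plan is a barrier argument based on the distance function $d(x)=d(x,\p\Om)$. Write $u:=u_{\la,\a}$ and set
\[
\mathcal{Q}[w]:=\sum_{i,j=1}^n\left(\si^{ij}-\f{w^iw^j}{1+|Dw|^2}\right)w_{i,j}+\la\e'_\a(w),
\]
so that \eqref{gupijlae'} reads $\mathcal{Q}[u]=0$. Two features of $\e$ will be used: $0\le\e'_\a\le1$ on $\R$ (from $0\le\e'\le1$) and $\e'_\a$ is non-increasing (from $\e''\le0$), so the zeroth-order term of $\mathcal{Q}$ is non-increasing in the unknown and the comparison principle for smooth sub/supersolutions applies. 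As a preliminary observation, $\mathcal{Q}[u]=0$ rewrites as $\sum_{i,j}\big(\si^{ij}-\f{u^iu^j}{1+|Du|^2}\big)u_{i,j}=-\la\e'_\a(u)\le 0$ with positive definite coefficients, so the maximum principle gives $u\ge0$ in $\Om$; it therefore suffices to bound $u$ from above near $\p\Om$ by a barrier vanishing on $\p\Om$ and to control that barrier's slope.

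Next I would construct the barrier on the collar $\mathcal{C}:=\{x\in\Om:\ 0\le d(x)<\ep\}$, on which $d$ is smooth and each level hypersurface $\p\Om_{d(x)}$ has mean curvature $\mathscr{H}(x,d(x))\ge\ep$. Take $\psi:=\phi\circ d$ with
\[
\phi(s)=-(\la+1)\log\left(1-\f{s}{\ep}\right),\qquad s\in[0,\ep),
\]
so $\phi(0)=0$, $\phi'(s)=\f{\la+1}{\ep-s}>0$, $\phi''(s)=\f{\la+1}{(\ep-s)^2}>0$, and $\phi(s)\to+\infty$ as $s\to\ep^-$. Using $|Dd|^2\equiv1$ (whence $\sum_j d^j d_{i,j}=0$) and the identity $\div_\Si(Dd)=-\mathscr{H}$ on $\mathcal{C}$, the computation should collapse to
\[
\sum_{i,j}\left(\si^{ij}-\f{\psi^i\psi^j}{1+|D\psi|^2}\right)\psi_{i,j}=\f{\phi''(d)}{1+\phi'(d)^2}+\phi'(d)\,\div_\Si(Dd)=\f{\phi''(d)}{1+\phi'(d)^2}-\phi'(d)\,\mathscr{H},
\]
since $\f{\psi^i\psi^j}{1+|D\psi|^2}=\f{\phi'(d)^2 d^i d^j}{1+\phi'(d)^2}$ annihilates the $\phi'(d)d_{i,j}$ term and only rescales the $\phi''(d)d_i d_j$ term. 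Then, using $\mathscr{H}\ge\ep$, $\phi'>0$ and $\e'_\a\le1$,
\begin{align*}
\mathcal{Q}[\psi]&\le\f{\phi''(d)}{1+\phi'(d)^2}-\ep\,\phi'(d)+\la=\f{\la+1}{(\ep-d)^2+(\la+1)^2}-\f{\ep(\la+1)}{\ep-d}+\la\\
&\le\f{1}{\la+1}-(\la+1)+\la=\f{1}{\la+1}-1<0
\end{align*}
on $\mathcal{C}$ (the first term is $\le(\la+1)^{-1}$, and $\ep-d\le\ep$ in the second), so $\psi$ is a strict supersolution there.

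Then comes the comparison. Since $u$ is smooth on the compact set $\overline\Om$, $M:=\sup_{\overline\Om}u<\infty$; choose $\de\in(0,\ep)$ with $-(\la+1)\log(\de/\ep)\ge M$ and work on $U:=\{x\in\Om:\ d(x)<\ep-\de\}\subset\mathcal{C}$. On $\p U\cap\p\Om$ one has $u=0=\psi$, and on $\p U\cap\{d=\ep-\de\}$ one has $u\le M\le\psi$, so $u\le\psi$ on $\p U$, while $\mathcal{Q}[\psi]<0=\mathcal{Q}[u]$ in $U$. If $\max_{\overline U}(u-\psi)>0$ it is attained at an interior point, where evaluating $\mathcal{Q}$ — using $Du=D\psi$ there and the covariant Hessian of $u-\psi$ being negative semidefinite, so the second-order terms compare correctly, together with the monotonicity of $z\mapsto\la\e'_\a(z)$ for the zeroth-order term — produces a contradiction; hence $u\le\psi$ on $U$. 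Finally, at any $x_0\in\p\Om$ the vector $Du(x_0)$ is parallel to the inner unit normal $Dd(x_0)$ (as $u$ vanishes on the level set $\p\Om$), and from $0\le u\le\psi$ near $\p\Om$ with $u(x_0)=\psi(x_0)=0$ one obtains $0\le\lan Du(x_0),Dd(x_0)\ran\le\phi'(0)=\ep^{-1}(\la+1)$; thus $|Du_{\la,\a}|(x_0)\le\ep^{-1}(\la+1)$.

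The main obstacle is the design of $\phi$: it must be a supersolution on the entire collar, and because the mean curvature of the level sets is only bounded below by $\ep$, this essentially forces the logarithmic profile blowing up at $d=\ep$ — which is precisely what lets the argument bypass any a priori height bound for the translating solutions — while the initial slope $\phi'(0)=\ep^{-1}(\la+1)$ supplies the claimed constant. The supporting ingredients (the sign $\div_\Si(Dd)=-\mathscr{H}$ on $\mathcal{C}$, the identity $\sum_j d^j d_{i,j}=0$, and the elementary estimates on $\phi$) are routine.
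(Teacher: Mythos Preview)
Your proof is correct and follows essentially the same approach as the paper: the same logarithmic barrier $\phi(d)=-(\la+1)\log(1-d/\ep)$ on the collar $\{0\le d<\ep\}$, the identity $\De_\Si d=-\mathscr{H}$, and a comparison yielding $0\le u_{\la,\a}\le\phi(d)$ and hence $|Du_{\la,\a}|\le\phi'(0)=\ep^{-1}(\la+1)$ on $\p\Om$. The only cosmetic differences are that the paper carries out the barrier inequality in divergence form (showing $\div_\Si\big(D\Phi/\sqrt{1+|D\Phi|^2}\big)\le-\la/\sqrt{1+|D\Phi|^2}$) rather than via your non-divergence operator $\mathcal{Q}$, and that you are more explicit about restricting to a compact sub-collar and invoking the monotonicity $\e''_\a\le0$ for the zeroth-order term in the comparison.
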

\begin{proof}
Let $\{e_i\}_{i=1}^{n}$ be an orthonormal vector field tangent to $\p\Om_t$ at a considered point $x\in\p\Om_t$, and denote $e_n$ be the unit normal vector field to $\p\Om_t$ so that $e_n$ points into $\Om_t$. Since $d$ is a constant on $\p\Om_t$, then at $x$ we get
\begin{equation}\aligned
\sum_{i=1}^{n-1}\left(D_{e_i}D_{e_i}-(D_{e_i}e_i)^T\right)d=0,
\endaligned
\end{equation}
and $\left(D_{e_n}D_{e_n}-D_{e_n}e_n\right)d=0$, where $(\cdots)^T$ denotes the projection into the tangent bundle of $\p\Om_t$.
Hence at $x$ one has
\begin{equation}\aligned\label{DeSid}
\De_\Si d=&\sum_{i=1}^n\left(D_{e_i}D_{e_i}-D_{e_i}e_i\right)d=-\sum_{i=1}^{n-1}\left\lan D_{e_i}e_i,e_n\right\ran D_{e_n}d+\left(D_{e_n}D_{e_n}-D_{e_n}e_n\right)d\\
=&-\sum_{i=1}^{n-1}\left\lan D_{e_i}e_i,e_n\right\ran=-\mathscr{H}(x,t).
\endaligned
\end{equation}

Set $\Phi=\phi(d)=-(\la+1)\log\left(1-\ep^{-1}d\right)$. Then $\phi'=(\la+1)(\ep-d)^{-1}$ and $\phi''=(\la+1)(\ep-d)^{-2}$. Together with \eqref{divla}, \eqref{DeSid} and $\mathscr{H}\ge\ep$, on $\Om\setminus\Om_{\ep}$ we conclude that
\begin{equation}\aligned
&\mathrm{div}_\Si\left(\f{D\Phi}{\sqrt{1+|D\Phi|^2}}\right)=\mathrm{div}_\Si\left(\f{\phi'Dd}{\sqrt{1+|\phi'|^2}}\right)
=\f{\phi'}{\sqrt{1+|\phi'|^2}}\De_\Si d+\f{\phi''}{\left(1+|\phi'|^2\right)^{\f32}}\\
=&\f{-\mathscr{H}}{\sqrt{1+(\la+1)^{-2}(\ep-d)^2}}+\f{(\la+1)^{-2}(\ep-d)}{\left(1+(\la+1)^{-2}(\ep-d)^2\right)^{\f32}}\\
\le&\f{-\ep}{\sqrt{1+(\la+1)^{-2}(\ep-d)^2}}+\f{\ep(\la+1)^{-2}}{\left(1+(\la+1)^{-2}(\ep-d)^2\right)^{\f32}}
\le\f{-\la(\la+1)^{-1}\ep}{\sqrt{1+(\la+1)^{-2}(\ep-d)^2}}\\
\le&\f{-\la(\la+1)^{-1}(\ep-d)}{\sqrt{1+(\la+1)^{-2}(\ep-d)^2}}=\f{-\la}{\sqrt{1+|D\Phi|^2}}.
\endaligned
\end{equation}
Assume that $u_{\la,\a}$ is a smooth solution to \eqref{gupijlae'} on $\overline{\Om}$ with $u_{\la,\a}=0$ on $\p\Om$ for any $\la,\a>0$.
Analog to \eqref{Compula'}, $0\le u_{\la,\a}\le \Phi$ on $\Om\setminus\Om_{\ep}$ by comparison principle. Then
\begin{equation}\aligned\label{uwcomp}
|Du_{\la,\a}|\le|D\Phi|=\ep^{-1}(\la+1)\qquad \mathrm{on}\ \ \p\Om.
\endaligned
\end{equation}
\end{proof}

For $\a>0$,
\begin{equation}\aligned
\f{\p}{\p\a}\e_\a(t)=\e\left(\f t\a\right)-\f t\a\e'\left(\f t\a\right)=-\int_0^{\f t\a}\tau\e''(\tau)d\tau\ge0.
\endaligned
\end{equation}
Similar to \eqref{Compula'}, we conclude that $S_{\la,\be}$ is above $S_{\la,\a}$ for any $\be>\a$. Letting $\a\rightarrow\infty$, the graph $S_{\la,\a}$ converges to a generalized graph $S_\la$ over $\Om$. Namely, let $u_\la=\lim_{\a\rightarrow\infty}u_{\la,\a}\ge0$, then $S_\la=\p\{(x,t)|\ u_\la(x)<t\}$ and $u_{\la}$ is the graphic function of $S_{\la}$. Note that $\{u_\a=\infty\}$ may be not empty, i.e., $S_\la$ may be not compact. Moreover, $S_\la$ is smooth and satisfies the equation \eqref{HlaEnu} on $\{u_\a<\infty\}$. Obviously, $S_\la$ is an area-minimizing hypersurface with the weight $e^{-\la x_{n+1}}$ in $\Om\times\R$ with boundary $\p\Om\times\{0\}$.


\section{Proof of the main theorem}

Let $\Om$ be a bounded domain in an $n$-dimensional smooth manifold $\Si$ with smooth mean convex boundary. Assume that $\p\Om$ is not a minimal hypersurface in $\Si$.
From \cite{W00}, there is a mean curvature (level set) flow $\mathcal{M}:\ t\in[0,\infty)\mapsto M_t$ with $M_0=\p\Om$. By maximum principle, there is a sufficiently small constant $\ep_0>0$ such that $M_t$ has positive mean curvature everywhere for all $0<t\le\ep_0$. Without loss of generality, we assume that $\p\Om$ has positive mean curvature everywhere.

Denote $F_t(\Om)$ be a domain in $\Om$ with $\p F_t(\Om)=M_t$ for $t\in[0,\infty)$.
By \cite{W00}, $F_t(\Om)$ is mean convex for each $t\in[0,\infty)$, and $M_t\cap M_{t+\tau}=\emptyset$,
$F_{t+\tau}(\Om)\subset \mathrm{interior}(F_t(\Om))$ for all $0\le t<t+\tau<\infty$.
Let $v:\bigcup_{t\ge0}M_t\rightarrow\R$ be the function such that $v(x)=t$ for each $x\in M_t$. Then $v$ satisfies
\begin{equation}\aligned\label{levelsv}
\mathrm{div}_\Si\left(\f{Dv}{|Dv|}\right)+\f{1}{|Dv|}=0
\endaligned
\end{equation}
in the viscosity sense.
Set $\Om_\infty=\bigcap_{t>0}F_t(\Om)$ and $M_\infty=\p\Om_\infty$. By \cite{W00}, $M_\infty$(maybe empty) has finitely many connected components, and the boundary of each component is a stable minimal variety whose singular set has Hausdorff dimension $\le n-8$.
Let the parabolic Hausdorff dimension of a set $E\subset\Si\times\R$ be the Hausdorff dimension of $E$ with respect to parabolic distance
$$\mathrm{dist}_{P}((x,t),(y,\tau))=\max\{d(x,y),|t-\tau|^{1/2}\},$$
where $d(\cdot,\cdot)$ is the distance function on $\Si$.
Let $\widetilde{\mathcal{S}}$ be the spacetime singular set of $\mathcal{M}$ defined in \cite{W00}. Then the parabolic Hausdorff dimension of $\widetilde{\mathcal{S}}$ is at most $n-2$ by \cite{W00}. Denote $\mathcal{S}=\{x\in\Om|\ \mathrm{there\ exists\ a\ } t>0\ \mathrm{such\ that}\ (x,t)\in \widetilde{\mathcal{S}}\}$.

Now we assume that $M_\infty$ is smooth. Then the mean curvature flow $M_t$ converges smoothly $M_\infty$ as $t\rightarrow\infty$. So there is an open set $K$ with $\overline{K}\subset\Om\setminus\Om_\infty$ such that $\overline{\mathcal{S}}\subset K$ and $v$ is smooth on $\overline{K}\setminus\mathcal{S}$.

From the previous argument, there is a translating soliton $S_\la$ whose graphic function $u_\la$ is a generalized function on $\Om$ satisfying \eqref{divla} on $\{u_\la<\infty\}$ and $u_\la=0$ on $\p\Om$ for any $\la>0$.
Then
$$t\in\R\rightarrow(S_\la)_t\triangleq \mathrm{graph}(u_\la-\la t)$$
is a family of smooth hypersurfaces in $\Om\times\R$ moving by mean curvature. Analog to the proof of Theorem 3 in \cite{W13}, set $U_\la:\,\Om\times\R\rightarrow\R$ by
$$U_\la(x,y)=\la^{-1}\left(u_\la(x)-y\right),$$
and $U:\,\Om\times\R\rightarrow\R$ by
$$U(x,y)=v(x).$$
As $\la\rightarrow\infty$, the mean curvature flows $t\in[0,\infty)\rightarrow(S_\la)_t$ converge as brakke flows to the flow $t\rightarrow M_t\times\R$ by elliptic regularization \cite{I94} and uniqueness of viscosity solution $v$. Since $U_\la^{-1}(t)\cap\{y=\tau\}=(S_\la)_t\cap\{y=\tau\}$ and $v^{-1}(t)\cap\{y=\tau\}=M_t$ for all $t,\tau\ge0$, then $U_\la$ converges as $\la\rightarrow\infty$ uniformly to $U$ on $\overline{K}$. Namely, $\la^{-1}u_\la$ converges uniformly to $v$ on $\overline{K}$. By the local regularity theorem in \cite{W05} (or by Brakke's regularity theorem in \cite{Br}), $\la^{-1}u_\la$ converges as $\la\rightarrow\infty$ to $v$ smoothly on $\overline{K}\setminus\mathcal{S}$.

Let $H_\la$ be the mean curvature of $S_\la$. By
$$H_\la=-\f{\la}{\sqrt{1+|Du_\la|^2}}=-\f{1}{\sqrt{\la^{-2}+\la^{-2}|Du_\la|^2}},$$
and $\la^{-1}u_\la$ converges uniformly to $v$ on $\overline{K}$,
there is a small constant $0<\de<1$ independent of $\la\ge1$
such that
\begin{equation}\aligned\label{uplowerboundHs}
H_\la\le-\de\qquad \mathrm{on}\ \overline{K}\ \ \mathrm{for\ every}\ \la\ge1.
\endaligned
\end{equation}
Denote $|A_\la|^2$ be the square norm of the second fundamental form of $S_\la$. Choose a local orthonormal frame field $\{e_i\}_{i=1}^n$ in $S_\la$, which is a normal basis at the considered point.
Combining \eqref{Dehijla} and \eqref{DeHla}, for any constant $\g$ we obtain
\begin{equation}\aligned
\De \left(h_{ij}+\g H_\la\right)=&\left\lan\la E_{n+1},\na \left(h_{ij}+\g H_\la\right)\right\ran-\left(|A_\la|^2+\overline{Ric}(\nu,\nu)\right)\left(h_{ij}+\g H_\la\right)\\
&+\overline{R}_{kikp}h_{jp}+\overline{R}_{kjkp}h_{ip}+2\overline{R}_{kijp}h_{kp}+\overline{\na}_k\overline{R}_{\nu jik}+\overline{\na}_i\overline{R}_{\nu kjk}
\endaligned
\end{equation}
on $K$. Obviously, $|A_\la|^2\ge\f1n|H_\la|^2\ge\f1n\de^2$ on $\overline{K}$ by \eqref{uplowerboundHs},
then there is a positive constant $C_0$ depending only on $n$, $\de$, $|R|$ and $|DR|$ on $\Om$ such that
\begin{equation}\aligned\label{hijgHla}
\De \left(h_{ij}+\g H_\la\right)\ge&\left\lan\la E_{n+1},\na \left(h_{ij}+\g H_\la\right)\right\ran-\left(|A_\la|^2+\overline{Ric}(\nu,\nu)\right)\left(h_{ij}+\g H_\la\right)
-C_0|A_\la|
\endaligned
\end{equation}
on $K$. Here $|R|^2=\sum_{i,j,k,l}|R_{ijkl}|^2$ and $|DR|^2=\sum_{i,j,k,l,m}|DR_{ijkl,m}|^2$.

\begin{lemma}\label{BonndAHla}
There is a positive constant $\g^*_\la\ge1$ depending only on $n$, $\de$, $|R|$, $|DR|$ on $\Om$ and $\inf_{\p K}\left(|A_\la|H_\la^{-1}\right)$ such that
\begin{equation}\aligned
-\f1{\g^*_\la}H_\la\le|A_\la|\le-\g^*_\la H_\la\qquad \mathrm{on}\ \ \overline{K}.
\endaligned
\end{equation}
\end{lemma}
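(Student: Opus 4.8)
The plan is to prove both inequalities simultaneously by applying the maximum principle to the tensor quantities $h_{ij} + \gamma H_\la$ on the compact set $\overline{K}$, for a suitable large constant $\gamma$, and then extract a bound on $|A_\la|$ from a bound on all the $h_{ij}$. The key input is the differential inequality \eqref{hijgHla}: on $K$ we have
\[
\De(h_{ij}+\gamma H_\la) \ge \langle \la E_{n+1}, \na(h_{ij}+\gamma H_\la)\rangle - \bigl(|A_\la|^2 + \overline{Ric}(\nu,\nu)\bigr)(h_{ij}+\gamma H_\la) - C_0|A_\la|.
\]
First I would recall $H_\la \le -\delta < 0$ on $\overline{K}$ from \eqref{uplowerboundHs}, so $|H_\la|$ is bounded below; and since $S_\la$ is area-minimizing in a fixed compact region, $|A_\la|$ is \emph{a priori} bounded above on $\overline{K}$ by standard interior estimates away from the free boundary — but more importantly, we do not need that: the point is to run the argument so that an \emph{a priori} assumed bound is recovered with a better constant, or rather that a single maximum-principle comparison closes the estimate. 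Concretely, consider the function $f = h_{ij} + \gamma H_\la$ (for fixed indices $i,j$, in a local frame; one should really work with the largest and smallest eigenvalues of $A_\la$ to make this frame-independent, or equivalently test against $\sup_{|\xi|=1} h(\xi,\xi) + \gamma H_\la$ and $\inf_{|\xi|=1} h(\xi,\xi) + \gamma H_\la$). At an interior maximum of $f$ on $\overline{K}$ one has $\De f \le 0$ and $\na f = 0$, so \eqref{hijgHla} forces
\[
0 \ge -\bigl(|A_\la|^2 + \overline{Ric}(\nu,\nu)\bigr) f - C_0 |A_\la|,
\]
i.e. $\bigl(|A_\la|^2 + \overline{Ric}(\nu,\nu)\bigr) f \ge -C_0 |A_\la|$. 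Since $|A_\la|^2 \ge \tfrac1n\delta^2$, the coefficient $|A_\la|^2 + \overline{Ric}(\nu,\nu)$ is positive provided $\delta^2/n$ exceeds $\sup_\Om |\overline{Ric}|$; if not, one first replaces $\gamma H_\la$ by $\gamma H_\la$ with $\gamma$ large enough that $|A_\la + \gamma H_\la \,\mathrm{Id}|$-type terms dominate — the cleaner route is to note $|A_\la|^2 \ge \tfrac1n H_\la^2$ and absorb the Ricci term into it after possibly shrinking $\delta$, which is harmless since $\delta$ was only a lower bound. Granting the coefficient is positive, divide through to get $f \ge -C_0 |A_\la| / (|A_\la|^2 + \overline{Ric}(\nu,\nu)) \ge -C_0 / |A_\la| \cdot (1 + o(1))$; combined with $|A_\la| \ge \delta/\sqrt n$ this gives a lower bound $f \ge -C_1$ for a constant $C_1 = C_1(n,\delta,|R|)$ at the interior max, hence everywhere on $\overline{K}$ after comparing with $\sup_{\p K} f$.

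Translating this back: for every unit tangent vector $\xi$ and every point of $\overline{K}$,
\[
h(\xi,\xi) + \gamma H_\la \ge -\max\Bigl\{C_1,\ \sup_{\p K}\bigl(-h(\xi,\xi) - \gamma H_\la\bigr)\Bigr\}.
\]
Choosing $\gamma = \gamma_\la$ large — large enough in terms of $\inf_{\p K}(|A_\la| H_\la^{-1})$ so that on the boundary $h(\xi,\xi) + \gamma H_\la \le 0$ (possible because $H_\la < 0$ and $|h(\xi,\xi)| \le |A_\la|$ is controlled relative to $H_\la$ on $\p K$ by hypothesis), and large enough that $-\gamma H_\la \ge \gamma \delta$ dominates $C_1$ — we obtain $-\gamma_\la H_\la \le h(\xi,\xi) + (\text{controlled}) $ in one direction and $h(\xi,\xi) \le -\gamma_\la H_\la/2$ say in the other, after relabeling constants. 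Taking the supremum over $\xi$ yields $|A_\la| \le -\gamma^*_\la H_\la$; applying the same argument to $-h(\xi,\xi) + \gamma H_\la$ (or simply using $|A_\la|^2 \ge \tfrac1n H_\la^2$ directly for the easy lower inequality $-\gamma^{*-1}_\la H_\la \le |A_\la|$) gives the reverse bound. The constant $\gamma^*_\la$ produced this way depends only on $n$, $\delta$, $|R|$, $|DR|$ on $\Om$ and on $\inf_{\p K}(|A_\la| H_\la^{-1})$, as claimed.

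The main obstacle is making the maximum-principle step rigorous as a statement about the \emph{tensor} $h_{ij}$ rather than its individual components in an arbitrary local frame, since the frame $\{e_i\}$ was only required to be normal at a point and the inequality \eqref{hijgHla} is written frame-dependently. The standard fix is Hamilton's tensor maximum principle, or equivalently to work with the function $\lambda_{\max}(x) = \sup_{|\xi|=1} h(\xi,\xi)$, which is Lipschitz and satisfies the same differential inequality in the viscosity/barrier sense at its maximum point (choosing the frame to diagonalize $A_\la$ there); \eqref{hijgHla} then applies with $h_{ij}$ replaced by $\lambda_{\max} \delta_{ij}$ at that point. A secondary technical point is that $S_\la$ may be non-compact (as noted after the displayed generalized-graph construction, $\{u_\la = \infty\}$ need not be empty), so one must confine attention to $\overline{K}$, where $|A_\la| \le C$ \emph{a priori} by the area-minimizing property plus the smooth convergence $\la^{-1}u_\la \to v$ on $\overline{K}\setminus\mathcal{S}$ — but the whole point of the lemma is that the bound on $\overline{K}$ is uniform in $\la$, which is exactly what the $\la$-independence of $\delta$, $C_0$ and the boundary ratio delivers. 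Everything else is routine: absorbing curvature terms, choosing $\gamma_\la$, and relabeling constants.
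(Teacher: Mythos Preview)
Your overall strategy---apply the maximum principle to $\kappa_1+\gamma H_\la$ using the differential inequality \eqref{hijgHla}, and handle the tensor issue by freezing the eigenvector at the maximum point---is exactly the paper's. But your execution of the maximum-principle step has a real gap.

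At an interior maximum of $f=\kappa_1+\gamma H_\la$ you obtain $\bigl(|A_\la|^2+\overline{Ric}(\nu,\nu)\bigr)f\ge -C_0|A_\la|$, and you then try to divide by the coefficient $|A_\la|^2+\overline{Ric}(\nu,\nu)$. You acknowledge this coefficient need not be positive, but your proposed fixes do not work: ``shrinking $\de$'' makes the lower bound $|A_\la|^2\ge\de^2/n$ \emph{weaker}, so it becomes \emph{harder} to dominate a negative Ricci term, not easier; and the vague suggestion to enlarge $\gamma$ so that ``$|A_\la+\gamma H_\la\,\mathrm{Id}|$-type terms dominate'' does not correspond to anything in the inequality, since $\gamma$ does not appear in the coefficient at all. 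Moreover, even granting positivity, the resulting bound $f\ge -C_0|A_\la|/(|A_\la|^2+\overline{Ric})$ is a \emph{lower} bound on $f$, which translates to a lower bound on $\kappa_1$; what you actually need for $|A_\la|\le -\gamma^* H_\la$ is the \emph{upper} bound $\kappa_1+\gamma^* H_\la\le 0$.

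The paper avoids both issues by a normalization you are missing: it does not fix $\gamma$ and then study $f$, but rather \emph{defines} $\gamma_1$ by the condition $\sup_K(\kappa_1+\gamma_1 H_\la)=-1$. If this supremum is attained on $\p K$, then $\gamma_1$ is already controlled by the boundary ratio. If it is attained at an interior point $x_0$, then plugging $f=-1$ into the inequality gives directly
\[
0\ge |A_\la|^2+\overline{Ric}(\nu,\nu)-C_0|A_\la|\qquad\text{at }x_0,
\]
a clean quadratic inequality yielding $|A_\la|(x_0)\le C_0+\sqrt{-c_0}$ with no sign hypothesis on the coefficient. Combining this with $-1=\kappa_1+\gamma_1 H_\la\le |A_\la|-\gamma_1\de$ at $x_0$ bounds $\gamma_1$. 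Once $\kappa_1+\gamma_1 H_\la\le -1<0$ everywhere, the bound on $\kappa_n$ (and hence on $|A_\la|$) follows from $\kappa_n=H_\la-\sum_{i<n}\kappa_i\ge(1+(n-1)\gamma_1)H_\la$. The lower bound $-\tfrac{1}{\gamma^*_\la}H_\la\le|A_\la|$ is, as you note, immediate from $|A_\la|^2\ge\tfrac1n H_\la^2$.
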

\begin{proof}
Let $\k_1\ge\k_2\cdots\ge\k_n$ be the principal curvatures of $S_\la$. Note that $\k_1=\sup_{|\xi|=1}A_\la(\xi,\xi)$, then $\k_1$ is a continuous function on $S_\la$. Further, for any $\g,\tilde{\g}\in\R$,
$$\sup_{K}\left(\k_1+\g H_\la\right)\le\sup_{K}\left(\k_1+\tilde{\g} H_\la\right)+\sup_{K}\left((\g-\tilde{\g})H_\la\right),$$
which implies that $\sup_{K}\left(\k_1+\g H_\la\right)$ is also a continuous function on $\g\in\R$. There is a constant $\g_0$ depending on $\inf_{\p K}\left(|A_\la|H_\la^{-1}\right)$ such that
$$\sup_{\p K}\left(\k_1+\g_0H_\la\right)=0.$$
If $\g_0<0$, we reset $\g_0=0$.
Then we choose a constant $\g_1$ such that
$$\sup_{K}\left(\k_1+\g_1 H_\la\right)=-1.$$
We assume $\g_1>\g_0+\f1{\de}$, or else we complete the proof. By $H_\la\le-\de$, on $\p K$ we have
\begin{equation}\aligned
\k_1+\g_1 H_\la=\k_1+\g_0 H_\la+(\g_1-\g_0)H_\la\le(\g_1-\g_0)H_\la\le-(\g_1-\g_0)\de<-1.
\endaligned
\end{equation}
Hence $\k_1+\g_1 H$ attains its maximum at some point $x_0$ in the interior of $K$. Choose a local orthonormal frame $\{e_i\}$ near $x_0$ in $\Si$ which is normal at $x_0$, and denote $h_{ij}=h(e_i,e_j)$ as mentioned before. Let $\xi=\sum_i\xi_ie_i\big|_{p_0}$ be a unit eigenvector of the second fundamental form corresponding to the eigenvalue $\k_1(x_0)$ at the point $x_0$, namely, $h(\xi,\xi)=\k_1(x_0)$. Then the smooth function $\hat{\k}_1\triangleq\sum_{i,j}h_{ij}\big|_x\xi_i\xi_j$ attains the maximum $\k_1(x_0)$ at $x_0$ in a neighborhood of $x_0$. From \eqref{hijgHla}, we obtain
\begin{equation}\aligned\label{k1gHla}
\De \left(\hat{\k}_1+\g H_\la\right)\ge&\left\lan\la E_{n+1},\na \left(\hat{\k}_1+\g H_\la\right)\right\ran-\left(|A_\la|^2+\overline{Ric}(\nu,\nu)\right)\left(\hat{\k}_1+\g H_\la\right)
-C_0|A_\la|.
\endaligned
\end{equation}
By maximum principle for \eqref{k1gHla}, at $x_0$ we have
\begin{equation}\aligned\label{|A|2Ric}
0\ge-\left(|A_\la|^2+\overline{Ric}(\nu,\nu)\right)\left(\hat{\k}_1+\g_1 H_\la\right)-C_0|A_\la|=|A_\la|^2+\overline{Ric}(\nu,\nu)-C_0|A_\la|.
\endaligned
\end{equation}
Let $c_0=\min\left\{0,\inf_{|\xi|=1,x\in\Om}Ric\big|_{x}(\xi,\xi)\right\}$. Then \eqref{|A|2Ric} implies that at $x_0$
\begin{equation}\aligned\label{est|A|}
|A_\la|\le\f{C_0}2+\sqrt{\f{C_0^2}4-c_0}\le C_0+\sqrt{-c_0}.
\endaligned
\end{equation}
On the other hand, by \eqref{uplowerboundHs} at $x_0$ one has
\begin{equation}\aligned\label{-1kgH}
-1=\k_1+\g_1H_\la\le|A_\la|+\g_1H_\la\le|A_\la|-\g_1\de.
\endaligned
\end{equation}
Combining \eqref{est|A|}\eqref{-1kgH} and the assumption $\g_1>\g_0+\f1{\de}$, we obtain
\begin{equation}\aligned
\g_1\le\g_0+\f1\de\left(C_0+\sqrt{-c_0}+1\right).
\endaligned
\end{equation}
According to the definition of $\g_1$ and $\k_i$, $\k_1+\g_1 H_\la\le-1<0$ on $\overline{K}$, which implies that
\begin{equation}\aligned
\k_n=H_\la-\sum_{i=1}^{n-1}\k_i\ge H_\la-(n-1)\k_1\ge\left(1+(n-1)\g_1\right)H_\la.
\endaligned
\end{equation}
Hence we complete the proof.
\end{proof}

Due to
\begin{equation}\aligned
&\left\lan\overline{\na}_{\p_{x_i}+\p_iu_\la E_{n+1}}(\p_{x_j}+\p_ju_\la E_{n+1}),\f{-Du_\la+E_{n+1}}{\sqrt{1+|Du_\la|^2}}\right\ran\\
=&\left\lan D_{\p_{x_i}}\p_{x_j},\f{-Du_\la+E_{n+1}}{\sqrt{1+|Du_\la|^2}}\right\ran+\p_{x_i}\p_{x_j}u_\la\left\lan E_{n+1},\f{-Du_\la+E_{n+1}}{\sqrt{1+|Du_\la|^2}}\right\ran\\
=&\left(\p_{x_i}\p_{x_j}u_\la-\lan D_{\p_{x_i}}\p_{x_j},Du_\la\ran\right)\f1{\sqrt{1+|Du_\la|^2}}=\f{(u_\la)_{i,j}}{\sqrt{1+|Du_\la|^2}},
\endaligned
\end{equation}
we have
\begin{equation}\aligned\label{Ala2}
|A_\la|^2=\sum_{i,j,k,l}\left(\si^{ij}-\f{\p^iu_\la\p^ju_\la}{1+|Du_\la|^2}\right)
\f{(u_\la)_{j,k}}{\sqrt{1+|Du_\la|^2}}\left(\si^{kl}-\f{\p^ku_\la\p^lu_\la}{1+|Du_\la|^2}\right)
\f{(u_\la)_{l,i}}{\sqrt{1+|Du_\la|^2}}.
\endaligned
\end{equation}

Now let's show the main theorem.
\begin{theorem}\label{mainThm}
Let $\mathcal{M}:\, t\in[0,\infty)\mapsto M_t$
be a mean curvature flow starting from a mean convex, smooth hypersurface in an $n$-dimensional complete smooth manifold $\Si$. If $\lim_{t\rightarrow\infty}\left(\cup_{s>t}M_s\right)$ (maybe empty) is a smooth hypersurface,
then all the singularities of $\mathcal{M}$ have convex type.
\end{theorem}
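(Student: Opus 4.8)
The plan is to feed the a priori estimate of Lemma \ref{BonndAHla} into Ilmanen's elliptic regularization and White's analysis of limit flows. The key observation is that Lemma \ref{BonndAHla}, together with the hypothesis that $M_\infty$ is smooth, forces the translating solitons $S_\la$ to be smooth and \emph{uniformly} controlled over the region $\overline K$ that carries all the singularities of $\mathcal M$; this is exactly the control that is missing for $n>7$ and that made White restrict to \emph{special} limit flows in \cite{W03}. By the set-up of Section 3 we may assume $M_0=\p\Om$ with $\Om$ a bounded domain whose boundary has everywhere positive mean curvature, so that $\overline K$, $\mathcal S$, the arrival-time function $v$, and the family $\{S_\la\}_{\la\ge1}$ of generalized translators are all at our disposal.

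The first task is to remove the $\la$-dependence of the constant $\g^*_\la$ in Lemma \ref{BonndAHla}. Since $\overline{\mathcal S}\subset K$ with $K$ open, $\p K$ is a compact set disjoint from $\mathcal S$, and therefore $\la^{-1}u_\la\to v$ in $C^\infty$ on $\p K$, with $Dv$ nowhere zero there. Writing $w_\la=\la^{-1}u_\la$, one has $H_\la=-\big(\la^{-2}+|Dw_\la|^2\big)^{-1/2}$ and, by \eqref{Ala2} together with $(u_\la)_{i,j}/\sqrt{1+|Du_\la|^2}=(w_\la)_{i,j}/\sqrt{\la^{-2}+|Dw_\la|^2}$, the quantity $|A_\la|$ converges; hence $H_\la$ and $|A_\la|$ converge uniformly on $\p K$ to finite limits with $|H_\la|$ bounded below, so $\inf_{\p K}\big(|A_\la|H_\la^{-1}\big)$ is bounded uniformly in $\la\ge\la_0$. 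Inspecting the proof of Lemma \ref{BonndAHla} we then obtain a single constant $\g^*\ge1$, independent of $\la\ge\la_0$, with $-\f1{\g^*}H_\la\le|A_\la|\le-\g^*H_\la$ on $\overline K$. Since also $\la^{-1}u_\la\to v<\infty$ uniformly on $\overline K$, the $S_\la$ are smooth graphs over $\overline K$ for large $\la$, with $|A_\la|\le\g^*|H_\la|\le\g^*\la$ there.

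The main step is the blow-up. Fix a spacetime singular point $(x_0,t_0)\in\widetilde{\mathcal S}$, so $x_0\in\mathcal S\subset K$ and $B_r(x_0)\subset K$ for some $r>0$, and on $B_r(x_0)$ the scale-invariant bound $|A_\la|\le\g^*|H_\la|$ holds. As $\la\to\infty$, the flows $t\mapsto(S_\la)_t=\mathrm{graph}(u_\la-\la t)$ converge as Brakke flows to $t\mapsto M_t\times\R$ by elliptic regularization \cite{I94}, with $\la^{-1}u_\la\to v$ smoothly on $\overline K\setminus\mathcal S$. Given parabolic dilations $\rho_j\to\infty$ that define a prescribed limit flow of $\mathcal M$ at $(x_0,t_0)$, I would, by a diagonal argument, choose $\la_j$ with $\la_j\sim\rho_j$ so that the rescaled translator flows $\rho_j\big((S_{\la_j})_\bullet-(x_0,t_0)\big)$ also converge to that limit flow; over $B_r(x_0)$ the bound $|A_{\la_j}|\le\g^*|H_{\la_j}|$ and the fact that in a blow-up at a singular point the mean curvature is $O(1)$ on compact subsets of the regular part give uniformly bounded curvature for the rescaled translators there, so the convergence upgrades to smooth convergence. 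Consequently every limit flow $\mathcal M'$ of $\mathcal M$ is a smooth, multiplicity-one (by \cite{W00}) mean convex mean curvature flow satisfying $|A'|\le\g^*|H'|$ on its regular part; that is, every limit flow of $\mathcal M$ is of exactly the type to which White's arguments in \cite{W03} apply, with no restriction on $n$ and no need to pass to special limit flows. Hence every time slice $K'_t$ of every limit flow is convex, and a backwardly self-similar limit flow is a static multiplicity-one plane, a shrinking round sphere, or a shrinking round cylinder $\R^k\times\S^{n-k}$.

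It remains to read off the definition of convex type, exactly as in \cite{W03,W13}. For condition (1): a tangent flow at a genuine singular point of $\mathcal M$ is a backwardly self-similar limit flow which is not a static plane, hence by the previous paragraph a multiplicity-one shrinking $\R^k\times\S^{n-k}$ with $k<n$, i.e. cylindrical. For condition (2): if $x_i\in M_{t(i)}$ were regular points converging to $x$ with $\liminf_i\k_1(M_{t(i)},x_i)/H(M_{t(i)},x_i)<0$, parabolically rescaling $\mathcal M$ about $(x_i,t(i))$ and passing to a limit would produce a limit flow through a regular point with $\k_1/H<0$, contradicting the convexity of limit flows established above; hence the $\liminf$ is $\ge0$. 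The principal obstacle is the blow-up/diagonal step: one must carry out White's elliptic-regularization construction of limit flows while matching the scales $\la_j\sim\rho_j$ and using Lemma \ref{BonndAHla} to convert the Brakke-flow convergence into smooth convergence with the ratio bound intact --- and it is precisely here that the smoothness of $M_\infty$ is used, through the region $\overline K$ and the uniform constant $\g^*$.
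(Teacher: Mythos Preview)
Your overall strategy is sound and the first part --- making $\g^*_\la$ independent of $\la$ via smooth convergence on $\p K$ --- matches the paper exactly. The divergence comes right after. The paper does \emph{not} perform any coupled blow-up of the translators. Instead, it simply lets $\la\to\infty$ once, using the smooth convergence of $\la^{-1}u_\la$ to $v$ on $K\setminus\mathcal S$ to pass the inequality $-\f1{\g^*}H_\la\le|A_\la|\le-\g^*H_\la$ to the level-set flow itself: one obtains $-\f1{\g^*}\,\div_\Si\!\big(\f{Dv}{|Dv|}\big)\le|A_\infty|\le-\g^*\,\div_\Si\!\big(\f{Dv}{|Dv|}\big)$ on $K\setminus\mathcal S$, i.e.\ a uniform ratio bound $|A|\le\g^*H$ at every regular spacetime point of $\mathcal M$ in $K$. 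The remaining work --- going from a uniform $|A|/H$ bound on the regular set of a mean convex flow to ``every singularity has convex type'' --- is then delegated wholesale to Appendix~B of \cite{W13}.

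Your route, by contrast, keeps the translators in play and tries to match scales $\la_j\sim\rho_j$ so that rescalings of $(S_{\la_j})_t$ converge to the prescribed limit flow of $\mathcal M$ (times $\R$), carrying the ratio bound along. This can be made to work, but note two points you gloss over: the rescaled translator flows live in one dimension higher and converge to $(\text{limit flow})\times\R$, not to the limit flow itself; and the ``diagonal argument'' that forces this particular subsequential limit to agree with the \emph{given} limit flow of $\mathcal M$ is exactly the delicate commutation-of-limits step that the paper's argument sidesteps. What you gain is a more self-contained argument that does not lean on the black box of \cite[Appendix~B]{W13}; what the paper gains is brevity and modularity --- once the scale-invariant bound lives on $M_t$ itself, the analysis of limit flows is literally identical to the Euclidean case already handled by White.
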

\begin{proof}
Let $v$ be a viscosity solution to \eqref{levelsv} on $\Om\setminus\Om_\infty$, then $|Dv|>0$ on $K\setminus \mathcal{S}$.
From \eqref{Ala2}, $H_\la$ converges to $\mathrm{div}_\Si\left(\f{Dv}{|Dv|}\right)$, and
\begin{equation}\aligned
|A_\la|^2&=\sum_{i,j,k,l}\left(\si^{ij}-\f{\p^iU_\la\p^jU_\la}{\la^{-2}+|DU_\la|^2}\right)
\left(\si^{kl}-\f{\p^kU_\la\p^lU_\la}{\la^{-2}+|DU_\la|^2}\right)
\f{(U_\la)_{j,k}(U_\la)_{l,i}}{\la^{-2}+|DU_\la|^2}\\
\rightarrow|A_\infty|^2&\triangleq\sum_{i,j,k,l}\left(\si^{ij}-\f{\p^iv\p^jv}{|Dv|^2}\right)
\f{v_{j,k}}{|Dv|}\left(\si^{kl}-\f{\p^kv\p^lv}{|Dv|^2}\right)
\f{v_{l,i}}{|Dv|}\qquad \mathrm{as}\ \la\rightarrow\infty
\endaligned
\end{equation}
on $K\setminus \mathcal{S}$ smoothly. Here $-\mathrm{div}_\Si\left(\f{Dv}{|Dv|}\right)$ and $|A_\infty|^2$ are the mean curvature and the square norm of the second fundamental form for the level set of $v$ in $K\setminus \mathcal{S}$, repsectively. Since $\p K\cap\overline{\mathcal{S}}=\emptyset$, we conclude that $\inf_{\p K}\left(|A_\la|H_\la^{-1}\right)$ is uniformly bounded for any $\la\ge1$, and then $\g^*_\la$ in Lemma \ref{BonndAHla} is bounded by an absolute constant $\g^*$ independent of $\la\ge1$. Namely, by Lemma \ref{BonndAHla} we have
$$-\f1{\g^*}H_\la\le|A_\la|\le-\g^* H_\la\qquad \mathrm{on}\ \ \overline{K}.$$
Hence we obtain that
\begin{equation}\aligned
-\f1{\g^*}\mathrm{div}_\Si\left(\f{Dv}{|Dv|}\right)\le|A_\infty|\le-\g^*\mathrm{div}_\Si\left(\f{Dv}{|Dv|}\right)\qquad \mathrm{on}\ K\setminus \mathcal{S}.
\endaligned
\end{equation}
According to appendix B in \cite{W13}, we complete the proof.
\end{proof}

(i) Assume that $\Si$ has nonnegative Ricci curvature in Theorem \ref{mainThm}. From \eqref{nainajH}, we have
\begin{equation}\aligned
\De\lan E_{n+1},\nu\ran=-\lan E_{n+1},\na H\ran-\big(|A|^2+\overline{Ric}(\nu,\nu)\big)\lan E_{n+1},\nu\ran.
\endaligned
\end{equation}
Let $S_{\la,\a}$ be a smooth graph with the graphic function $u_{\la,\a}$ satisfying \eqref{Hlaa}.
Combining \eqref{Enauuuu}, one has
\begin{equation}\aligned
\De\lan E_{n+1},\nu\ran=\la\e'_\a\lan E_{n+1},\na \lan E_{n+1},\nu\ran\ran-\big(|A|^2+\overline{Ric}(\nu,\nu)-\la\e''_\a|\na u_{\la,\a}|^2\big)\lan E_{n+1},\nu\ran.
\endaligned
\end{equation}
Then by maximum principle for the above equation we have
$$\sup_\Om\sqrt{1+|Du_{\la,\a}|^2}\le\sup_{\p\Om}\sqrt{1+|Du_{\la,\a}|^2}.$$
Combining the estimate \eqref{uwcomp}, $\f1{\la+1}|Du_{\la,\a}|$ is uniformly bounded on $\Om$ independent of $\la,\a>0$ as well as $\f1{\la+1}u_{\la,\a}$. Hence the graph$_{u_{\la}}$ is a bounded graph with $u_\la=\lim_{\a\rightarrow\infty}u_{\la,\a}$.
Since $\f1{\la}u_\la$ converges to $v$ as $\la\rightarrow\infty$ on any compact set $Q$ in $\Om\setminus\Om_\infty$, we get that $v$ is bounded on $Q$ by a constant independent of $Q$. Hence the mean curvature flow $\mathcal{M}$ in Theorem \ref{mainThm} must vanish in finite time.

(ii) If $\Si$ is simply connected with nonpositive sectional curvature in Theorem \ref{mainThm}, then we claim
\begin{equation}\aligned\label{1t1Dut}
\sup_{t\in(0,\infty)}\left((1+t)^{-1}\sup_{x\in\Om}u_{t}(x)\right)<\infty.
\endaligned
\end{equation}
Let's prove it by contradiction.
If \eqref{1t1Dut} fails, there are sequence $t_i>0$ and $\a_i\rightarrow\infty$ such that $(1+t_i)^{-1}\sup_{\Om}u_{t_i,\a_i}\rightarrow\infty$ as $i\rightarrow\infty$, where $u_{t_i,\a_i}$ is a smooth solution to \eqref{gupijlae'} with $\la,\a$ replaced by $t_i,\a_i$, respectively. We define $s_i\triangleq\sup_{\Om}u_{t_i,\a_i}$ and $\widehat{u}_{t_i}=s_i^{-1}u_{t_i,\a_i}$, then 
\begin{equation}\aligned\label{divuti}
\mathrm{div}_\Si\left(\f{D\widehat{u}_{t_i}}{\sqrt{s_i^{-2}+|D\widehat{u}_{t_i}|^2}}\right)+\f{t_i\e'_{\a_i}}{s_i\sqrt{s_i^{-2}+|D\widehat{u}_{t_i}|^2}}=0.
\endaligned
\end{equation}
On the other hand, there is a point $x_{t_i}\in\Om$ such that $\widehat{u}_{t_i}(x_{t_i})=1$.
Let $\r_{x_{t_i}}(x)=d(x,x_{t_i})$ for any $x\in\Om$, then $\r_{x_{t_i}}^2$ is smooth on $\Si$. By Hessian comparison theorem, we have
$$\De_\Si \r_{x_{t_i}}^2\ge 2n.$$
Set $\La=2\mathrm{diam}(\Om)>0$.
Note $(1+t_i)^{-1}s_i\rightarrow\infty$ as $i\rightarrow\infty$. Hence for sufficiently large $i>0$
\begin{equation}\aligned\label{divrxti}
&\mathrm{div}_\Si\left(\f{D\left(\f12-\La^{-2}\r_{x_{t_i}}^2\right)}{\sqrt{s_i^{-2}+\left|D\left(\f12-\La^{-2}\r_{x_{t_i}}^2\right)\right|^2}}\right)+
\f{t_i}{s_i\sqrt{s_i^{-2}+\left|D\left(\f12-\La^{-2}\r_{x_{t_i}}^2\right)\right|^2}}\\
\le&\f{-\La^{-2}\De_\Si\r_{x_{t_i}}^2}{\sqrt{s_i^{-2}+4\La^{-4}\r_{x_{t_i}}^2}}+\f{8\La^{-6}\r_{x_{t_i}}^2}{\left(s_i^{-2}+4\La^{-4}\r_{x_{t_i}}^2\right)^{\f32}}
+\f{t_i}{s_i\sqrt{s_i^{-2}+4\La^{-4}\r_{x_{t_i}}^2}}\\
\le&-\f{2n\La^{-2}}{\sqrt{s_i^{-2}+4\La^{-4}\r_{x_{t_i}}^2}}+\f{8\La^{-6}\r_{x_{t_i}}^2}{\left(s_i^{-2}+4\La^{-4}\r_{x_{t_i}}^2\right)^{\f32}}
+\f{t_i}{s_i\sqrt{s_i^{-2}+4\La^{-4}\r_{x_{t_i}}^2}}\\
\le&\f{-2(n-1)\La^{-2} s_i+t_i}{s_i\sqrt{s_i^{-2}+4\La^{-4}\r_{x_{t_i}}^2}}<0.
\endaligned
\end{equation}
Let $\mathcal{E}$ be an open set defined by $\{x\in\Om|\, \widehat{u}_{t_i}>\f12-\La^{-2}\r_{x_{t_i}}^2\}$.
Since $\widehat{u}_{t_i}(x_{t_i})=1$ and $\f12-\La^{-2}\r_{x_{t_i}}^2>0=\widehat{u}_{t_i}$ on $\p\Om$, then $\widehat{u}_{t_i}-\f12+\La^{-2}\r_{x_{t_i}}^2=0$ on $\p\mathcal{E}$.
Analog to \eqref{Compula'},
$\widehat{u}_{t_i}-\f12+\La^{-2}\r_{x_{t_i}}^2$ attains its maximum on $\mathcal{E}$ at the boundary $\p\mathcal{E}$ by the maximum principle for \eqref{divuti} and \eqref{divrxti}. So we get a contradiction as $\widehat{u}_{t_i}-\f12+\La^{-2}\r_{x_{t_i}}^2=0$ on $\p\mathcal{E}$, and the claim \eqref{1t1Dut} holds.

So $(1+t)^{-1}\sup_{\Om}|u_t|$ is uniformly bounded independent of $t>0$, which implies that $v$ is bounded and the mean curvature flow $\mathcal{M}$ in Theorem \ref{mainThm} must vanish in finite time.

Therefore, from Theorem \ref{mainThm} we can get the following corollary.
\begin{corollary}\label{ThmCor}
Let $\mathcal{M}:\, t\in[0,\infty)\mapsto M_t$
be a mean curvature flow starting from a mean convex, smooth hypersurface in an $n$-dimensional complete smooth manifold $\Si$. If either $\Si$ has nonnegative Ricci curvature or $\Si$ is simply connected with nonpositive sectional curvature,
then all the singularities of $\mathcal{M}$ have convex type.
\end{corollary}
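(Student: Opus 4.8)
The plan is to deduce Corollary \ref{ThmCor} from Theorem \ref{mainThm} by showing that, under either curvature hypothesis, the flow $\mathcal{M}$ becomes extinct in finite time. If that is established, then $\lim_{t\to\infty}\left(\cup_{s>t}M_s\right)$ is empty, hence (vacuously) a smooth hypersurface, and Theorem \ref{mainThm} applies verbatim. Since the arrival-time function $v$ on $\Om\setminus\Om_\infty$ is the locally uniform limit of $\la^{-1}u_\la$, and $u_\la=\lim_{\a\to\infty}u_{\la,\a}$, it suffices to bound $(\la+1)^{-1}u_{\la,\a}$ uniformly in $\la\ge1$ and $\a>0$ on each fixed compact subset of $\Om$: a uniform bound on $v$ forces $\Om_\infty=\Om$, i.e. $M_\infty=\emptyset$, so the flow vanishes in finite time.

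For the case $Ric\ge0$ on $\Si$: I differentiate the penalized soliton equation \eqref{Hlaa} and combine \eqref{nainajH} with \eqref{Enauuuu} to obtain a linear elliptic equation for $\lan E_{n+1},\nu\ran=(1+|Du_{\la,\a}|^2)^{-1/2}$ on $S_{\la,\a}$, namely $\De\lan E_{n+1},\nu\ran=\la\e'_\a\lan E_{n+1},\na\lan E_{n+1},\nu\ran\ran-\big(|A|^2+\overline{Ric}(\nu,\nu)-\la\e''_\a|\na u_{\la,\a}|^2\big)\lan E_{n+1},\nu\ran$. Because $N=\Si\times\R$ and $Ric\ge0$ give $\overline{Ric}(\nu,\nu)\ge0$, and $\e''_\a\le0$, the zeroth-order coefficient has the sign needed for the maximum principle, so $\sup_\Om\sqrt{1+|Du_{\la,\a}|^2}\le\sup_{\p\Om}\sqrt{1+|Du_{\la,\a}|^2}$. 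Inserting the boundary gradient estimate $|Du_{\la,\a}|\le\ep^{-1}(\la+1)$ on $\p\Om$ (the preceding Lemma) yields a bound on $(\la+1)^{-1}|Du_{\la,\a}|$ over all of $\Om$, hence on $(\la+1)^{-1}u_{\la,\a}$ after integrating from $\p\Om$ using $u_{\la,\a}=0$ there and $\mathrm{diam}(\Om)<\infty$. Letting $\a\to\infty$ and then $\la\to\infty$ shows $v$ is bounded, so $\mathcal{M}$ vanishes in finite time.

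For the case $\Si$ simply connected with nonpositive sectional curvature: I argue by contradiction, supposing $(1+t)^{-1}\sup_\Om u_t$ is unbounded and choosing $t_i>0$, $\a_i\to\infty$ with $s_i:=\sup_\Om u_{t_i,\a_i}$ satisfying $(1+t_i)^{-1}s_i\to\infty$. The normalization $\widehat u_{t_i}:=s_i^{-1}u_{t_i,\a_i}$ solves the rescaled equation \eqref{divuti}, attains the value $1$ at some $x_{t_i}\in\Om$, and vanishes on $\p\Om$. Since $\Si$ is Cartan--Hadamard, $\rho_{x_{t_i}}^2$ is smooth and the Hessian comparison theorem gives $\De_\Si\rho_{x_{t_i}}^2\ge2n$; with $\La=2\,\mathrm{diam}(\Om)$ one verifies, exactly as in \eqref{divrxti}, that $\tfrac12-\La^{-2}\rho_{x_{t_i}}^2$ is a strict supersolution of the degenerate operator in \eqref{divuti} for all large $i$, the decisive cancellation being $-2(n-1)\La^{-2}s_i+t_i<0$, which holds because $t_i/s_i\to0$. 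Comparing on $\mathcal{E}=\{\widehat u_{t_i}>\tfrac12-\La^{-2}\rho_{x_{t_i}}^2\}$, whose boundary lies where the two functions coincide (using $\tfrac12-\La^{-2}\rho_{x_{t_i}}^2>0=\widehat u_{t_i}$ on $\p\Om$), the maximum principle forces $\widehat u_{t_i}-\tfrac12+\La^{-2}\rho_{x_{t_i}}^2\le0$ on $\mathcal{E}$, contradicting $\widehat u_{t_i}(x_{t_i})=1$. Hence \eqref{1t1Dut} holds, $v$ is bounded, and again $\mathcal{M}$ vanishes in finite time.

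The main obstacle in both cases is precisely this uniform-in-$(\la,\a)$ height bound for the penalized solitons: one needs an a priori estimate for the degenerate operator $\div_\Si\!\big(Dw/\sqrt{1+|Dw|^2}\big)$ that is stable under the limits $\a\to\infty$ and $\la\to\infty$. In case (i) the good sign of the reaction term in the equation for $\lan E_{n+1},\nu\ran$ does the work, and the only subtlety is checking that the $\e''_\a$ contribution does not spoil that sign. In case (ii) the delicate point is calibrating the barrier $\tfrac12-\La^{-2}\rho_{x_{t_i}}^2$ so that the Hessian-comparison gain $2n\La^{-2}$ dominates both the gradient-squared correction terms and the forcing $t_i/s_i$ uniformly in $i$; the contradiction hypothesis $(1+t_i)^{-1}s_i\to\infty$ is exactly what makes this possible.
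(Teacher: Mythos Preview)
Your proposal is correct and follows essentially the same route as the paper: in both curvature cases you establish a uniform bound on $(\la+1)^{-1}u_{\la,\a}$ (via the maximum principle for $\lan E_{n+1},\nu\ran$ when $Ric\ge0$, and via the Hessian-comparison barrier $\tfrac12-\La^{-2}\rho_{x_{t_i}}^2$ in the Cartan--Hadamard case), conclude that $v$ is bounded so the flow is extinct in finite time, and then invoke Theorem~\ref{mainThm} with $M_\infty=\emptyset$. One small slip: you write ``forces $\Om_\infty=\Om$'' where you mean $\Om_\infty=\emptyset$; the subsequent ``$M_\infty=\emptyset$'' shows the intent is right.
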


\bibliographystyle{amsplain}

\end{document}